\documentclass[12pt,reqno]{amsart}

\usepackage{amsthm, amsmath, amsfonts, amssymb, graphicx, tikz-cd, calrsfs, hyperref}
\usepackage[shortlabels]{enumitem}
\usepackage[capitalize,noabbrev]{cleveref} 

\newtheorem{theorem}{Theorem}[section]
\newtheorem{lemma}[theorem]{Lemma}
\newtheorem{proposition}[theorem]{Proposition}
\newtheorem{corollary}[theorem]{Corollary}

\theoremstyle{definition}
\newtheorem{definition}[theorem]{Definition}

\newtheorem{remark}[theorem]{Remark}

\setlength{\topmargin}{-0.5in}
\setlength{\textheight}{9in}
\setlength{\oddsidemargin}{0in}
\setlength{\evensidemargin}{0in}
\setlength{\textwidth}{6.5in}

\newcounter{mycount}

\newcommand{\myref}[1]{\hyperref[#1]{#1}}

\def\Frm{{\sf{Frm}}}
\def\Loc{{\sf{Loc}}}
\def\LPries{{\sf{LPries}}}

\def\up{{\uparrow}}
\def\down{{\downarrow}}

\def\pf{{\sf{pf}}}

\def\ClopUp{{\sf{ClopUp}}}
\newcommand{\cl}{{\sf{cl}}}
\newcommand{\Int}{{\sf{int}}}

\title{A New Proof of the Joyal-Tierney Theorem}

\author{G. Bezhanishvili}
\address{New Mexico State University}
\email{guram@nmsu.edu}

\author{L. Carai}
\address{Universitat de Barcelona}
\email{luca.carai.uni@gmail.com}

\author{P. Morandi}
\address{New Mexico State University}
\email{pmorandi@nmsu.edu}

\begin{document}

\begin{abstract}
We give an alternative, more geometric, proof of the well-known Joyal-Tierney Theorem in locale theory by utilizing Priestley duality for frames.
\end{abstract}

\subjclass[2020]{18F70; 06D22; 06D20; 06D50}

\keywords{Frame, localic map, Joyal-Tierney Theorem, Priestley duality}

\maketitle

\section{Introduction and Preliminaries}

A well-known result in locale theory, known as the Joyal-Tierney Theorem, states that a localic map $f : L \to M$ is open iff its left adjoint $f^*$ is a complete Heyting homomorphism (see, e.g., \cite[Prop.~III.7.2]{PP12}). 
In addition, if $L$ is subfit, then $f$ is open iff $f^*$ is a complete lattice homomorphism 
(see, e.g., \cite[Prop.~V.1.8]{PP12}). Our aim is to give another, more geometric, proof of this result utilizing the language of Priestley spaces.

Priestley duality \cite{Pri70,Pri72} establishes a dual equivalence between the categories of bounded distributive lattices and Priestley spaces. We recall that a {\em Priestley space} is a Stone space $X$ equipped with a partial order $\le$ such that 
$x\not\le y$ implies the existence of a clopen upset $U$ such that $x\in U$ and $y\notin U$. A \emph{Priestley morphism} is a continuous order-preserving map. 

Pultr and Sichler \cite{PS88} showed how to restrict Priestley duality to the category of frames. We recall (see, e.g., \cite[p.~10]{PP12}) that a {\em frame} is a complete lattice $L$ satisfying the infinite distributive law 
$
a\wedge\bigvee S = \bigvee \{a\wedge s : s \in S \}
$
for each $a \in L$ and $S\subseteq L$. A map $h : L\to M$ between frames is a {\em frame homomorphism} if $h$ preserves finite meets and arbitrary joins. Let $\Frm$ be the category of frames and frame homomorphisms. 

\begin{definition}
\hfill
\begin{enumerate}
\item A Priestley space $X$ is a {\em localic space}, or simply an \emph{$L$-space}, provided the closure of an open upset is a clopen upset. 
\item A Priestley morphism $f : X \to Y$ between $L$-spaces is an {\em $L$-morphism} provided $\cl f^{-1}U = f^{-1} \cl U$ for each open upset $U$ of $Y$.
\item Let $\LPries$ be the category of $L$-spaces and $L$-morphisms.
\end{enumerate}
\end{definition}

\begin{proposition} \cite[p.~198]{PS88} \label{prop: PS duality}
$\Frm$ is dually equivalent to $\LPries$.
\end{proposition}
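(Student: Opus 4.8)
The plan is to derive the statement by restricting the two Priestley functors. Recall that Priestley duality is implemented by the contravariant functors $\ClopUp\colon \Pries \to \DLat$, sending a Priestley space $X$ to its bounded distributive lattice of clopen upsets and a Priestley morphism $f$ to $f^{-1}$, and $X_{(-)}\colon \DLat \to \Pries$, sending $L$ to the space $X_L$ of prime filters of $L$, topologized by taking the sets $\widehat a = \{P \in X_L : a \in P\}$ and their complements as a subbasis of clopens and ordered by inclusion. Here $a \mapsto \widehat a$ is a lattice isomorphism $L \cong \ClopUp(X_L)$, and a $\DLat$-morphism $h\colon L \to M$ has Priestley dual $g\colon X_M \to X_L$ determined by $g^{-1}(\widehat a) = \widehat{h(a)}$ for all $a \in L$. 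It thus suffices to establish: (a) if $L$ is a frame then $X_L$ is an $L$-space, and if $X$ is an $L$-space then $\ClopUp(X)$ is a frame; (b) for frames $L$ and $M$, the Priestley dual of $h\colon L \to M$ is an $L$-morphism if and only if $h$ is a frame homomorphism; and (c) that (a) and (b), together with the natural isomorphisms of Priestley duality, promote it to the asserted dual equivalence.

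The technical core is the following dictionary entry: if $L$ is a frame and $S \subseteq L$, then in $X_L$
\[
\widehat{\bigvee S} \;=\; \cl\Big(\bigcup_{s\in S}\widehat s\Big) .
\]
The inclusion $\supseteq$ is immediate since $\widehat{\bigvee S}$ is a closed set containing each $\widehat s$. For $\subseteq$, suppose a prime filter $P$ contains $\bigvee S$ but lies outside the closure, and choose a basic clopen neighbourhood $\widehat a \setminus \widehat b$ of $P$ that meets no $\widehat s$; then $\widehat a \cap \widehat s \subseteq \widehat b$, i.e.\ $a \wedge s \le b$, for every $s \in S$, so the infinite distributive law forces $a \wedge \bigvee S \le b$, contradicting $a, \bigvee S \in P$ and $b \notin P$. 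Granting this, (a) follows: when $L$ is a frame, every open upset of $X_L$ has the form $\bigcup_{s\in S}\widehat s$ and the formula identifies its closure with the clopen upset $\widehat{\bigvee S}$, so $X_L$ is an $L$-space; and when $X$ is an $L$-space, $\ClopUp(X)$ is complete — the join of a family of clopen upsets being the closure of their union — and combining this with the elementary fact that $W \cap \cl B = \cl(W\cap B)$ for $W$ clopen gives the infinite distributive law, so $\ClopUp(X)$ is a frame.

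For (b), apply the dictionary on both sides. For $S \subseteq L$ set $U = \bigcup_{s\in S}\widehat s$, an open upset of $X_L$, and note that every open upset of $X_L$ arises this way. The element $h(\bigvee S)$ corresponds under $M \cong \ClopUp(X_M)$ to $g^{-1}(\widehat{\bigvee S}) = g^{-1}(\cl U)$, whereas $\bigvee_{s} h(s)$ corresponds to $\cl\big(\bigcup_{s} g^{-1}(\widehat s)\big) = \cl(g^{-1}U)$. Hence $h$ preserves the join of $S$ precisely when $g^{-1}(\cl U) = \cl(g^{-1}U)$; since a $\DLat$-morphism already preserves finite meets, it follows that $h$ is a frame homomorphism if and only if $g$ is an $L$-morphism. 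Only the inclusion $g^{-1}(\cl U) \subseteq \cl(g^{-1}U)$ has content here, the reverse one holding for any continuous map.

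Finally, (c) is bookkeeping: by (a) the functor $X_{(-)}$ restricts to a functor $\Frm \to \LPries$ and $\ClopUp$ to a functor $\LPries \to \Frm$, and by (b) the hom-set bijection of Priestley duality restricts, for frames $L$ and $M$, to a bijection between frame homomorphisms $L \to M$ and $L$-morphisms $X_M \to X_L$; since composition, identities, and the unit and counit of Priestley duality are inherited verbatim, the restricted functors form a dual equivalence $\Frm \simeq \LPries$. I do not expect a deep obstacle; the point needing care is that $\Frm$ and $\LPries$ are \emph{not} full subcategories of $\DLat$ and $\Pries$, so the morphism correspondence in (b) must genuinely be proved as an equivalence rather than inherited for free.
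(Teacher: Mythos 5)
Your argument is correct and is essentially the Pultr--Sichler argument that the paper imports by citation from \cite{PS88} without proof: the whole duality rests on the dictionary identity $\phi\left(\bigvee S\right)=\cl\bigcup\{\phi(s):s\in S\}$ (valid via the infinite distributive law), from which both the restriction of the objects and the morphism correspondence follow. You correctly flag the one point needing care --- that $\Frm$ and $\LPries$ are non-full subcategories, so the hom-set bijection must be checked to restrict --- and your part (b) handles it.
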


\begin{remark}
Since frames are exactly complete Heyting algebras (see, e.g., \cite[Prop.~1.5.4]{Esa19}), 
every $L$-space is an Esakia space, where we recall that a Priestley space $X$ is an {\em Esakia space} provided $\down U$ is clopen for each clopen $U \subseteq X$. 
\end{remark}

\begin{remark}
The contravariant functors establishing Pultr-Sichler duality 
are the restrictions of the contravariant functors establishing Priestley duality. They are described as follows. 

For an $L$-space $X$, let $\ClopUp(X)$ be the frame of clopen upsets of $X$. The 
functor $\ClopUp:\LPries \to\Frm$ sends $X \in \LPries$ to $\ClopUp(X)$ and an $\LPries$-morphism $f :X \to Y$ to the $\Frm$-morphism $f^{-1} : \ClopUp(Y) \to \ClopUp(X)$. 

For $L \in \Frm$ let $X_L$ be the set of prime filters of $L$ ordered by inclusion and equipped with the topology whose basis is $\{ \phi(a)\setminus\phi(b) : a, b \in L \}$, where $\phi : L \to \wp(X_L)$ is the {\em Stone map} $\phi(a) = \{ x \in X_L : a \in x\}$. Then $X_L$ is an $L$-space and the 
functor $\pf : \Frm \to \LPries$ sends $L \in \Frm$ to $X_L$ and a $\Frm$-morphism $h : L \to M$ to $h^{-1} : X_M \to X_L$.
\end{remark}

Let $L,M$ be frames. Every frame homomorphism $h:L\to M$ has a right adjoint $r=h_*:M\to L$, called a {\em localic map}. It is given by $r(b)=\bigvee\{a\in L : h(a)\le b \}$. 
The following  provides a characterization of localic maps:

\begin{proposition} \cite[Prop.~II.2.3]{PP12} \label{prop: localic map}
A map $r : M \to L$ between frames is a localic map iff
\begin{enumerate}[$(1)$]
\item $r$ preserves all meets $($so has a left adjoint $h=r^*);$ 
\item $r(a) = 1$ implies $a = 1;$
\item $r(h(a) \to b) = a \to r(b).$ 
\end{enumerate}
\end{proposition}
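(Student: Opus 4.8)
My plan is to prove the two implications separately, relying throughout on standard facts about order-adjunctions between complete lattices: a monotone map between complete lattices preserves all meets iff it has a left adjoint; a left adjoint preserves all joins; and the two members of an adjoint pair determine each other uniquely. I will also use that, since $L$ and $M$ are frames, they are complete Heyting algebras, so the expressions $h(a)\to b$ and $a\to r(b)$ in $(3)$ make sense, with $z\le x\to y \iff z\wedge x\le y$ in each.

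For the forward implication, suppose $r=h_*$ for a frame homomorphism $h : L\to M$, so that $h$ is the left adjoint of $r$ and $h=r^*$. Being a right adjoint, $r$ preserves all meets, which is $(1)$. For $(2)$, note $r(a)=1 \iff 1\le r(a) \iff h(1)\le a$, and $h(1)=1$ because $h$ preserves the empty meet; hence $a=1$. For $(3)$, I would compare lower sets: for every $c\in L$,
\[
\begin{aligned}
c\le r(h(a)\to b) &\iff h(c)\le h(a)\to b \iff h(c)\wedge h(a)\le b \\
&\iff h(c\wedge a)\le b \iff c\wedge a\le r(b) \iff c\le a\to r(b),
\end{aligned}
\]
using in turn the adjunction, Heyting implication in $M$, preservation of binary meets by $h$, the adjunction again, and Heyting implication in $L$; since both sides of $(3)$ have the same lower bounds, they are equal.

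For the converse, assume $(1)$--$(3)$. By $(1)$ the map $r$ has a left adjoint $h:=r^*$, and as a left adjoint $h$ preserves all joins, so it remains only to check that $h$ preserves finite meets. The nullary case uses $(2)$: since $h(1)\le b \iff 1\le r(b)$, taking $b=h(1)$ gives $r(h(1))=1$, whence $h(1)=1$. The binary case is the heart of the argument. Fixing $a,c\in L$, I would observe that for every $b\in M$,
\[
\begin{aligned}
h(a)\wedge h(c)\le b &\iff h(c)\le h(a)\to b \iff c\le r(h(a)\to b) \\
&\iff c\le a\to r(b) \iff a\wedge c\le r(b) \iff h(a\wedge c)\le b,
\end{aligned}
\]
where the passage from $r(h(a)\to b)$ to $a\to r(b)$ is exactly hypothesis $(3)$. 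Substituting $b=h(a\wedge c)$ gives $h(a)\wedge h(c)\le h(a\wedge c)$, and the reverse inequality is immediate from monotonicity of $h$. Thus $h$ is a frame homomorphism whose right adjoint is $r$, so $r=h_*$ is a localic map.

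The only genuinely load-bearing step is the binary-meet case of the converse: the Frobenius-type identity $(3)$ is precisely what upgrades the left adjoint $h$ from a monotone map to a meet-preserving one, through the substitution $b=h(a\wedge c)$. Conditions $(1)$ and $(2)$ are the easy ingredients, supplying respectively the existence of $h$ and the preservation of the empty meet. Beyond bookkeeping --- making sure each Heyting implication is evaluated in the correct frame and each displayed equivalence is genuinely reversible --- I do not anticipate any real obstacle.
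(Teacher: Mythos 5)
Your proof is correct. The paper itself gives no proof of this proposition (it is quoted from Picado--Pultr, Prop.~II.2.3), but your argument is the standard one: both directions are routine adjunction calculations, and the key step --- substituting $b = h(a\wedge c)$ into the chain of equivalences to extract $h(a)\wedge h(c)\le h(a\wedge c)$ from condition $(3)$ --- is exactly how the cited source proceeds.
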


Let $\Loc$ be the category of frames and localic maps. The following is obvious from \cref{prop: PS duality,prop: localic map}:

\begin{proposition}
$\Loc$ is dually isomorphic to $\Frm$, and hence equivalent to $\LPries$.
\end{proposition}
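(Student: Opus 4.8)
The plan is to produce an explicit isomorphism of categories $F \colon \Loc \to \Frm^{\mathrm{op}}$ which is the identity on objects and sends a localic map to its left adjoint, and then to combine it with the Pultr--Sichler duality of \cref{prop: PS duality}. Since $\Loc$ and $\Frm$ have the same objects (namely frames), the object assignment is forced; all the work is in the morphisms.

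First I would define $F$ on morphisms. Given a localic map $r \colon M \to L$, \cref{prop: localic map}(1) provides a left adjoint $h = r^* \colon L \to M$, and I would check that $h$ is a frame homomorphism: it preserves arbitrary joins because it is a left adjoint, and it preserves finite meets --- the top element because $h(1) \le b$ iff $r(b) = 1$ iff $b = 1$ by \cref{prop: localic map}(2), and binary meets by a short computation from \cref{prop: localic map}(3) (this step is even immediate if one takes localic maps to be by definition the maps of the form $g_*$). Thus $F(r) := r^*$ is a $\Frm$-morphism $L \to M$, i.e.\ an $\Frm^{\mathrm{op}}$-morphism $M \to L$, so $F$ has the right variance.

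Next I would verify that $F$ is a functor: $\func{id}_L$ is its own left adjoint, so $F(\func{id}_L) = \func{id}_L$; and if $s \colon N \to M$ and $r \colon M \to L$ are localic maps then composing the adjunctions $s^* \dashv s$ and $r^* \dashv r$ yields $s^* \circ r^* \dashv r \circ s$, so $F(r \circ s) = (r \circ s)^* = s^* \circ r^*$, which is exactly $F(r) \circ F(s)$ computed in $\Frm^{\mathrm{op}}$. Finally $F$ is bijective on objects and, on each hom-set, the assignment $r \mapsto r^*$ has two-sided inverse $h \mapsto h_*$ (every frame homomorphism has a localic right adjoint, and adjoints are unique, so $(r^*)_* = r$ and $(h_*)^* = h$); hence $F$ is an isomorphism of categories and $\Loc$ is dually isomorphic to $\Frm$. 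For the last clause, \cref{prop: PS duality} gives a dual equivalence $\Frm \simeq \LPries^{\mathrm{op}}$, hence an equivalence $\Frm^{\mathrm{op}} \simeq \LPries$, which composed with $F$ gives $\Loc \simeq \LPries$.

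I do not expect a genuine obstacle here --- as the text notes, the statement is essentially formal once \cref{prop: PS duality,prop: localic map} are in hand. The only point that is not pure category theory is the verification that the left adjoint of a localic map is a frame homomorphism, and that is precisely what \cref{prop: localic map} supplies; everything else reduces to the uniqueness and composability of adjoints and to passing to opposite categories.
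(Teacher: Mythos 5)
Your argument is correct and is exactly the standard unpacking of what the paper leaves as ``obvious'' from \cref{prop: PS duality,prop: localic map}: the paper gives no explicit proof, and your verification via uniqueness and composability of adjoints (plus the observation that the left adjoint of a localic map is a frame homomorphism, which is immediate from the paper's definition of localic maps as right adjoints of frame homomorphisms) is the intended route. No issues.
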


To define open localic maps, we recall the notion of a sublocale which generalizes that of a subspace. Let $L$ be a frame. A subset $S$ of $L$ is a \emph{sublocale} of $L$ if $S$ is closed under arbitrary meets and $x \to s \in S$ for each $x \in L$ and $s \in S$. Sublocales correspond to nuclei, where we recall (see, e.g., \cite[Sec.~III.5.3]{PP12}) that a {\em nucleus} on $L$ is a map $\nu : L \to L$ satisfying
\begin{enumerate}
\item $a \le \nu a$;
\item $\nu\nu a \le \nu a$;
\item $\nu(a\wedge b) = \nu a \wedge \nu b$.
\end{enumerate}

We can go back and forth between nuclei and sublocales as follows. If $\nu$ is a nucleus on $L$, then $S_\nu := \nu[L]$ is a sublocale of $L$. Conversely, if $S$ is a sublocale of $L$, then $\nu_S : L \to L$ is a nucleus on $L$, where $\nu_S(a) = \bigwedge \{ s \in S : a \le s\}$. This correspondence is one-to-one (see, e.g., \cite[Prop.~III.5.3.2]{PP12}). 

If $a \in L$, then $\mathfrak o(a) := \{a \to x : x \in L\}$ is a sublocale of $L$, called an \emph{open sublocale} of $L$, whose corresponding nucleus $\nu_a$ is given by $\nu_a(x) = a \to x$ (see, e.g., \cite[pp.~33, 35]{PP12}).

\begin{definition} \cite[p.~37]{PP12}
A localic map $r : M \to L$ is {\em open} if for each open sublocale $S$ of $M$, the image $r[S]$ is an open sublocale of $L$.
\end{definition}

\section{The Joyal-Tierney Theorem}

The Joyal-Tierney Theorem provides a characterization of open localic maps (see, e.g., \cite[Prop.~7.3]{PP08} or \cite[pp.~37--38]{PP12}): 

\begin{theorem} [Joyal-Tierney] \label{thm: JT}
Let $r : M \to L$ be a localic map between frames with left adjoint $h$. The following are equivalent:
\begin{enumerate}[$(1)$]
\item $r$ is open.
\item $h$ is a complete Heyting homomorphism.
\item $h$ has a left adjoint $\ell=h^*$ satisfying the Frobenius condition $\ell(a \wedge h(b)) = \ell(a) \wedge b$ for each $a \in M$ and $b \in L$.
\end{enumerate}
\end{theorem}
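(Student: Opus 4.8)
The proof will proceed by establishing the cycle $(1)\Rightarrow(3)\Rightarrow(2)\Rightarrow(1)$, working entirely on the Priestley side via the Pultr--Sichler duality. The first move is to translate every object in the statement into the language of $L$-spaces. Given the localic map $r:M\to L$, its left adjoint $h$ dually corresponds to an $L$-morphism $f:X_M\to X_L$ (so $h=f^{-1}$ under $\ClopUp$), and I would spend a preliminary paragraph recording the dictionary: clopen upsets of $X_L$ are the elements of $L$; the Heyting implication $a\to b$ corresponds to the clopen upset $X_L\setminus\cl({\down}(U\setminus V))$ where $U,V$ are the clopen upsets matching $a,b$; open sublocales $\mathfrak{o}(a)$ of $L$ correspond to the open subspaces $X_L\setminus\cl{\down}U$ of the Esakia space $X_L$ (an intersection of the clopen upset picture with the standard fact that open sublocales are complemented by closed ones); and the left adjoint $\ell$ of $h$, when it exists, is the map on clopen upsets induced by direct image along $f$, namely $V\mapsto$ the smallest clopen upset containing $f[V]$.

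With this dictionary in place, the Frobenius condition $\ell(a\wedge h(b))=\ell(a)\wedge b$ becomes a purely geometric identity about $f$: for clopen upsets $U$ of $X_M$ and $V$ of $X_L$, the smallest clopen upset containing $f[U\cap f^{-1}V]$ equals (smallest clopen upset containing $f[U]$) $\cap\,V$. I expect that the key geometric hypothesis powering everything is \emph{openness of $f$ as a map of spaces} — more precisely, that $f[C]$ is a clopen upset whenever $C$ is, which should be the dual manifestation of $r$ being an open localic map. So the heart of $(1)\Rightarrow(3)$ is to show: $r$ open $\iff$ $f$ sends clopen upsets to clopen upsets. The forward direction unpacks the definition of open localic map through open sublocales and the $L$-space description above; the Frobenius identity then follows because, when $f[U]$ is already a clopen upset, the ``smallest clopen upset containing $f[U]$'' is just $f[U]$, and the set-theoretic identity $f[U\cap f^{-1}V]=f[U]\cap V$ holds for any map $f$ and any $V\subseteq X_L$.

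For $(3)\Rightarrow(2)$ I would argue that the existence of a left adjoint $\ell$ satisfying Frobenius forces $h$ to preserve Heyting implication: since $h$ is already a frame homomorphism, it preserves finite meets and arbitrary joins, so it is a complete lattice homomorphism; the Frobenius condition is exactly what is needed, via the standard adjunction manipulation ($\ell(a)\le b \iff a\le h(b)$ together with Frobenius) to upgrade this to preservation of $\to$, i.e.\ to make $h$ a complete Heyting homomorphism — this step is essentially the classical lattice-theoretic lemma and I would either cite it or reproduce the one-line computation on the Priestley side using the description of $\to$ above. Finally $(2)\Rightarrow(1)$: a complete Heyting homomorphism $h$ preserves arbitrary meets as well (being complete and preserving $\to$, hence the top and all implications, it preserves the meets computed as $\bigwedge S = \cdots$), so $h$ itself has a left adjoint $\ell$, and one checks that $\ell$ of an open sublocale nucleus is again an open sublocale nucleus, using that $h$ commutes with $\to$; dually, $h$ preserving $\to$ says precisely $f^{-1}\cl{\down}(\,\cdot\,)$ behaves well, which yields that $f$ carries clopen upsets to clopen upsets, closing the loop back to $r$ open.

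\textbf{Anticipated main obstacle.}
The routine parts are the adjunction shuffles in $(3)\Rightarrow(2)$ and $(2)\Rightarrow(1)$; the real work is the geometric translation in the first step, specifically pinning down \emph{exactly} which property of the $L$-morphism $f$ corresponds to $r$ being open, and proving that ``$f$ preserves clopen upsets'' is equivalent to the sublocale-image condition in the definition of open localic map. This requires care because open sublocales of $M$ correspond not to clopen upsets of $X_M$ but to certain open subspaces, so I must track how the direct image $r[\mathfrak{o}(b)]$ is computed and verify it lands in the open-sublocale form $\mathfrak{o}(a)$ precisely when the associated clopen-upset image is clopen. I expect a subtlety around whether one needs the full strength of $f$ being an $L$-morphism (the condition $\cl f^{-1}U=f^{-1}\cl U$) or only that it is a Priestley morphism between $L$-spaces, and around the interplay between ${\down}$ and $\cl$ in Esakia spaces when showing the implication-preservation translates cleanly.
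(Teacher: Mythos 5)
Your overall strategy is the same as the paper's: pass to the Pultr--Sichler duals, identify openness of $r$ with ``$f[U]$ is a clopen upset for every clopen upset $U$,'' the left adjoint $\ell$ with $U\mapsto \up f[U]$, preservation of $\to$ with the p-morphism condition, and the Frobenius identity with $\up(f[U]\cap V)=\up f[U]\cap V$; the paper then proves the three geometric conditions equivalent (in the order $(1)\Rightarrow(2)\Rightarrow(3)\Rightarrow(1)$ rather than your $(1)\Rightarrow(3)\Rightarrow(2)\Rightarrow(1)$, an immaterial difference). Your closing of the cycle is fine in outline: once $f[U]$ is a clopen upset, $\up f[U]=f[U]$ and $f[U\cap f^{-1}V]=f[U]\cap V$ give $(3)$ immediately, and the adjunction computation $c\le h(a\to b)\iff \ell(c)\wedge a\le b\iff c\wedge h(a)\le h(b)$ gives $(3)\Rightarrow(2)$. (One slip there: ``$h$ is a frame homomorphism, so it is a complete lattice homomorphism'' is a non sequitur; meet-preservation comes from the existence of $\ell$, not from $h$ being a frame homomorphism.)

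The genuine gap is exactly where you locate your ``anticipated main obstacle'': the equivalence ``$r$ is open $\iff$ $f$ sends clopen upsets to clopen upsets'' is asserted but not proved, and the route you propose for it is problematic. You plan to represent the open sublocale $\mathfrak o(a)$ as an open \emph{subspace} of $X_L$ and then track direct images of subspaces under $f$; but sublocales of $L$ do not correspond to subspaces of $X_L$ in any way made available by the duality you are using, and the image sublocale $r[\mathfrak o(b)]$ is not the set-theoretic image of a subspace, so this dictionary entry cannot be cashed out without substantial extra machinery. The paper instead avoids subspaces entirely: it first proves the purely algebraic identity $\nu_{r[S]}=r\nu_S h$, reducing openness of $r$ to the statement that for each $a\in M$ there is $b\in L$ with $r\nu_a h=\nu_b$, and then establishes the equivalence $b\le (r\nu_a h)(c)\iff \phi(b)\cap f[\phi(a)]\subseteq\phi(c)$; combined with the fact that the sets $\phi(c)\setminus\phi(d)$ form a basis of $X_L$ and that $f[\phi(a)]$ is closed, this yields both directions of the translation. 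Some argument of this kind (or an equivalent algebraic one) is indispensable, and without it your $(1)\Rightarrow(3)$ and the return to $(1)$ both rest on an unproved pivot.
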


Our aim is to give an alternative proof of this result using Priestley duality for frames. 
For this we need to translate the algebraic conditions of \cref{thm: JT} into geometric conditions about Priestley spaces.
We will freely use the following well-known lemma.
For parts (1) and (2) see \cite[Lems.~11.21, 11.22]{DP02}; for part (3) see \cite[Prop.~2.6]{Pri84}; and part (4) is a consequence of Esakia's lemma (see \cite[Lem.~3.3.12]{Esa19}). 

\begin{lemma}\label{lem: facts Priestley}
\hfill
\begin{enumerate}[label=$(\arabic*)$, ref=\thelemma(\arabic*)]
\item \label[lemma]{lem: facts Priestley 1} For a Priestley space $X$, the set $\{ U \setminus V : U,V \in \ClopUp(X) \}$ is a basis of open sets of $X$.
\item \label[lemma]{lem: facts Priestley 2} Let $X$ be a Priestley space. If $F,G$ are disjoint closed subsets of $X$, with $F$ an upset and $G$ a downset, then there exist disjoint clopen subsets $U,V$ of $X$, with $U$ an upset and $V$ a downset, such that $F \subseteq U$ and $G \subseteq V$. In particular, every open upset is a union and every closed upset is an intersection of clopen upsets.
\item \label[lemma]{lem: facts Priestley 3}If $F$ is a closed subset of a Priestley space, then $\up F$ and $\down F$ are closed.
\item \label[lemma]{lem: facts Priestley 4} Let $f:X \to Y$ be a continuous map between Priestley spaces. For each $x \in X$ we have 
\[
f\left[ \bigcap \{ U \in \ClopUp(X) : x \in U\} \right] = \bigcap \{ f[U] : x \in U \in \ClopUp(X) \}.
\]
\end{enumerate}
\end{lemma}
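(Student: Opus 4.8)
The plan is to prove the four parts essentially independently; the common engine will be the Priestley separation axiom together with compactness of $X$ (plus, for part~(3), the fact that the order $\le$ is a closed subset of $X\times X$). For part~(1), since $X$ is a Stone space its clopen sets already form a basis, so it suffices to write an arbitrary clopen $W$ as a union of sets of the form $U\setminus V$ with $U,V\in\ClopUp(X)$. Fixing $x\in W$, for each $y\notin W$ we have $x\ne y$, hence $x\not\le y$ or $y\not\le x$; the separation axiom (applied to $x,y$ in the first case, and to $y,x$ and complemented in the second) produces either a clopen upset or a clopen downset that contains $x$ and misses $y$. The complements of these sets form an open cover of the compact set $X\setminus W$, so finitely many of them suffice; intersecting the corresponding clopen upsets and clopen downsets gives a clopen upset $U$ and a clopen downset $D$ with $x\in U\cap D\subseteq W$, and then $V:=X\setminus D\in\ClopUp(X)$ satisfies $x\in U\setminus V\subseteq W$.

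For part~(2) I would use the standard two-stage compactness argument. Disjointness of $F$ and $G$, with $F$ an upset and $G$ a downset, forces $x\not\le y$ whenever $x\in F$ and $y\in G$; so for fixed $x\in F$ the complements of clopen upsets containing $x$ cover $G$, and compactness of $G$ yields a clopen upset $U_x\ni x$ disjoint from $G$. Compactness of $F$ then yields a clopen upset $U:=\bigcup_i U_{x_i}$ with $F\subseteq U$ and $U\cap G=\emptyset$, and $V:=X\setminus U$ is a clopen downset containing $G$ and disjoint from $U$. The ``in particular'' claims follow by applying this to suitable pairs: if $F$ is a closed upset and $x\notin F$, then $F$ and $\down x$ are disjoint closed sets (an upset and a downset), so separation produces a clopen upset containing $F$ but not $x$; dually, if $W$ is an open upset and $x\in W$, then $\up x$ and $X\setminus W$ are disjoint closed sets, so separation produces a clopen upset containing $x$ and contained in $W$. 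Here I am using that $\up x$ and $\down x$ are closed, which is part~(3).

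Part~(3) follows at once from the projections $\pi_1,\pi_2\colon X\times X\to X$: writing $R$ for the order relation, $\up F=\pi_2\big(R\cap(F\times X)\big)$ and $\down F=\pi_1\big(R\cap(X\times F)\big)$ are continuous images of closed, hence compact, subsets of the compact space $X\times X$, so they are closed. For part~(4) I would first observe that $\bigcap\{U\in\ClopUp(X):x\in U\}=\up x$: the inclusion $\supseteq$ holds because a clopen upset containing $x$ contains $\up x$, and $\subseteq$ holds because, by the separation axiom, every $z$ with $x\not\le z$ is excluded by some clopen upset containing $x$. The inclusion $f[\up x]\subseteq\bigcap\{f[U]:x\in U\in\ClopUp(X)\}$ is immediate. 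For the reverse inclusion, suppose $y$ lies in every $f[U]$ but $y\notin f[\up x]$. Then $f^{-1}(y)$ is closed and disjoint from $\up x=\bigcap\{U\in\ClopUp(X):x\in U\}$; since this family of clopen upsets is closed under finite intersection, compactness of $X$ gives a single clopen upset $U\ni x$ with $f^{-1}(y)\cap U=\emptyset$, that is, $y\notin f[U]$ --- a contradiction.

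I expect the only step with genuine content to be part~(4): the point is to replace the abstract intersection by $\up x$ and to exploit the downward-directedness of $\{U\in\ClopUp(X):x\in U\}$, so that the finite subfamily delivered by compactness collapses to a single clopen upset; this is precisely the mechanism underlying Esakia's lemma. The remaining parts are routine separation-plus-compactness arguments, and all of this material is classical, as reflected in the references already cited alongside the statement.
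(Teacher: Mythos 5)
Your proof is correct, and since the paper itself only cites the literature for this lemma, your separation-plus-compactness arguments are essentially the standard ones found in those references (including the directed-family compactness argument for part (4), which is exactly the Esakia-lemma mechanism). The only assertion you leave unproved---that $\le$ is closed in $X\times X$ for part (3)---is immediate from the Priestley separation axiom, since $U\times(X\setminus U)$ is a neighborhood of $(x,y)$ missing the order relation whenever $U$ is a clopen upset with $x\in U$ and $y\notin U$.
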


We recall (see, e.g., \cite[p.~265]{DP02}) that if $h : L \to M$ is a frame homomorphism and $f : X_M \to X_L$ is its Priestley dual, then
\begin{equation}
f^{-1}(\phi(a)) = \phi(h(a)) \label{**} \tag{i}. 
\end{equation}
We also recall that if $r:M\to L$ is a localic map and $S$ is a sublocale of $M$, then $r[S]$ is a sublocale of $L$ (see, e.g., \cite[Prop.~III.4.1.]{PP12}). 

\begin{lemma} \label{lem: nuclei corresponding to the image of a sublocale}
Let $r : M \to L$ be a localic map with left adjoint $h$. If $S$ is a sublocale of $M$, then 
$\nu_{r[S]} = r\nu_S h$.
\end{lemma}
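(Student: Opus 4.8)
The plan is to fix $a \in L$ and show that both $\nu_{r[S]}(a)$ and $r(\nu_S(h(a)))$ coincide with the meet $\bigwedge\{r(s) : s \in S,\ a \le r(s)\}$ computed in $L$. Since $r[S]$ is a sublocale of $L$, as recalled above, the nucleus $\nu_{r[S]}$ is defined, and by construction $\nu_{r[S]}(a) = \bigwedge\{t \in r[S] : a \le t\}$. Using that $r[S] = \{r(s) : s \in S\}$, this set is precisely $\{r(s) : s \in S,\ a \le r(s)\}$, which settles one side.

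For the other side I would start from $\nu_S(h(a)) = \bigwedge\{s \in S : h(a) \le s\}$. The first step is to use that a localic map preserves arbitrary meets (condition (1) of \cref{prop: localic map}), so that $r(\nu_S(h(a))) = \bigwedge\{r(s) : s \in S,\ h(a) \le s\}$. The one substantive step is then to invoke the adjunction $h \dashv r$: for $s \in M$ one has $h(a) \le s$ if and only if $a \le r(s)$. Rewriting the index set of the meet accordingly yields $r(\nu_S(h(a))) = \bigwedge\{r(s) : s \in S,\ a \le r(s)\}$, which is the expression obtained for $\nu_{r[S]}(a)$; hence the two maps agree on every $a \in L$.

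I do not expect any genuine obstacle here: the argument is a direct unfolding of the definitions of $\nu_S$ and $\nu_{r[S]}$ together with meet-preservation of $r$ and the defining property of the adjunction. The only points requiring a little care are bookkeeping ones: that $r[S]$ is indeed a sublocale, so that $\nu_{r[S]}$ makes sense (cited above), and that all the meets in sight are over nonempty index sets — each contains the top element, since $1 \in S$, $h(a) \le 1$, and $r(1) = 1$ (the latter because $r$ preserves the empty meet) — so that applying $r$ to them is unproblematic.
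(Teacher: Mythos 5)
Your proof is correct and is essentially the paper's own argument: both compute $\nu_{r[S]}(a)$ as $\bigwedge\{r(s) : s\in S,\ a\le r(s)\}$, rewrite the index set via the adjunction $h(a)\le s \Leftrightarrow a\le r(s)$, and pull the meet through $r$ using that localic maps preserve arbitrary meets. The extra bookkeeping remarks (that $r[S]$ is a sublocale and that the meets are over nonempty sets) are harmless and do not change the substance.
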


\begin{proof}
Let $a \in L$. We have
\begin{align*}
\nu_{r[S]}(a) &= 
\bigwedge \{ r(s) : s \in S, \, a \le r(s)\} \\
&= \bigwedge \{ r(s) : s \in S, \, h(a) \le s\} \\
&= r\left(\bigwedge \{ s \in S : h(a) \le s\}\right) \\
&= r\nu_Sh(a). 
\end{align*}
Therefore, $\nu_{r[S]} = r\nu_S h$.
\end{proof}

We thus see that a localic map $r : M \to L$, with left adjoint $h$, is open iff for each $a \in M$ there is $b \in L$ with $r\nu_a h = \nu_b$. We use this observation in the proof of the following lemma.

\begin{lemma} \label{lem: translation of r open}
Let $r : M \to L$ be a localic map, $h$ the left adjoint of $r$, and $f : X_M \to X_L$ the Priestley dual of $h$. The following are equivalent:
\begin{enumerate}[$(1)$]
\item $r$ is open.
\item If $U$ is a clopen upset of $X_M$, then $f[U]$ is a clopen upset of $X_L$.
\end{enumerate}
\end{lemma}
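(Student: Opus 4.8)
The plan is to leverage the observation recorded just before the lemma — that $r$ is open iff for every $a\in M$ there is $b\in L$ with $r\nu_a h=\nu_b$ — and to translate the equation $r\nu_a h=\nu_b$ into a condition on the clopen upsets $U=\phi(a)$ of $X_M$ and $B=\phi(b)$ of $X_L$. Identifying $L$ with $\ClopUp(X_L)$ and $M$ with $\ClopUp(X_M)$ via the Stone maps, $h$ becomes $f^{-1}$, and pseudocomplement and Heyting implication become the Esakia-space operations $V_1\to V_2=X\setminus\down(V_1\setminus V_2)$ (recall $L$-spaces are Esakia spaces). The one computation that needs care is $r$: as the right adjoint of $f^{-1}$ it sends a clopen upset $C$ of $X_M$ to $\bigvee\{V\in\ClopUp(X_L):f^{-1}(V)\subseteq C\}$, and I would check, using \cref{lem: facts Priestley 2,lem: facts Priestley 3} together with the $L$-space property, that this join equals $\cl(X_L\setminus\down f[X_M\setminus C])$. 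Feeding these in, together with the elementary identities $f[S\cap f^{-1}(Z)]=f[S]\cap Z$ and $\down f[\down S]=\down f[S]$, the equation $r\nu_a h=\nu_b$ unravels (as an equality of maps $L\to L$, evaluated at a clopen upset $W$ of $X_L$) to: for every clopen upset $W$ of $X_L$,
\[
\Int\down\bigl(f[U]\setminus W\bigr)=\down\bigl(B\setminus W\bigr).\qquad(\star)
\]

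With $(\star)$ in hand, $(2)\Rightarrow(1)$ is quick: given $a$, put $B:=f[U]$, which is clopen by hypothesis; then $B\setminus W$ is clopen and $X_L$ is Esakia, so $\down(B\setminus W)$ is clopen, hence open, and $(\star)$ holds with both sides equal to $\down(B\setminus W)$. Thus $r\nu_a h=\nu_b$ for the element $b$ corresponding to $f[U]$, for every $a$, and so $r$ is open.

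For $(1)\Rightarrow(2)$, fix a clopen upset $U$ of $X_M$, take the corresponding $a$, obtain $b$ with $r\nu_a h=\nu_b$ (so $(\star)$ holds), and set $B=\phi(b)$. Taking $W=\emptyset$ in $(\star)$ gives $\down B\subseteq\down f[U]$. Separately, evaluating $r\nu_a h=\nu_b$ at $b$ gives $r(a\to h(b))=b\to b=1$, hence $a\to h(b)=1$ by \cref{prop: localic map}(2), i.e.\ $a\le h(b)$, i.e.\ $U\subseteq f^{-1}(B)$, i.e.\ $f[U]\subseteq B$; so $\down f[U]=\down B$. The remaining point — and the step I expect to be the real obstacle — is the reverse inclusion $B\subseteq f[U]$, which I would establish geometrically: if $p\in B\setminus f[U]$, then $p\in B\subseteq\down f[U]$ yields $q\in f[U]$ with $p\le q$, and $q\ne p$ since $p\notin f[U]$, so $F:=\up p\cap f[U]$ is nonempty with every element strictly above $p$, whence $p\notin\up F$. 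By \cref{lem: facts Priestley 3} $\up F$ is a closed upset, so by \cref{lem: facts Priestley 2} there is a clopen upset $W_0\supseteq F$ with $p\notin W_0$; then $p\in B\setminus W_0\subseteq\down(B\setminus W_0)=\Int\down(f[U]\setminus W_0)\subseteq\down(f[U]\setminus W_0)$ by $(\star)$, producing $q'\in f[U]\setminus W_0$ with $p\le q'$, so $q'\in\up p\cap f[U]=F\subseteq W_0$ — a contradiction. Hence $f[U]=B$ is a clopen upset.

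In short, the two delicate points are the preliminary translation — above all the formula $r(C)=\cl(X_L\setminus\down f[X_M\setminus C])$ and the derivation of $(\star)$, which are routine but must be carried out with care — and, as the genuinely geometric heart of the argument, the reverse inclusion $B\subseteq f[U]$ in $(1)\Rightarrow(2)$, handled by isolating $F=\up p\cap f[U]$ and separating it from $p$ by a clopen upset.
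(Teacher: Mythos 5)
Your proof is correct, and although it follows the same high-level plan as the paper---reduce openness to the equation $r\nu_a h=\nu_b$ via \cref{lem: nuclei corresponding to the image of a sublocale} and then translate that equation to the dual side---the translation is carried out by a genuinely different route. The paper never computes $r$ on the space: using the adjunction $h\dashv r$ it characterizes $(r\nu_a h)(c)$ by its lower bounds, $b\le(r\nu_a h)(c)\Leftrightarrow\phi(b)\cap f[\phi(a)]\subseteq\phi(c)$, and runs both implications off this single equivalence (its separation step in $(1)\Rightarrow(2)$ separates a point of $\phi(b)\setminus f[U]$ from the closed set $f[U]$ by a basic clopen $\phi(c)\setminus\phi(d)$). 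You instead compute $r$ explicitly as $C\mapsto\cl(X_L\setminus\down f[X_M\setminus C])$---which is correct, though it relies on the join formula $\phi(\bigvee S)=\cl\bigcup_{s\in S}\phi(s)$ for $L$-spaces, which the paper records only in its meet form---and unravel $r\nu_a h=\nu_b$ to the pointwise identity $(\star)$; your separation step then isolates $\up p\cap f[U]$ and separates it from $p$ by a clopen upset, and your use of \cref{prop: localic map}(2) to get $a\le h(b)$ replaces the paper's appeal to the lower-bound equivalence. All the steps you flag as delicate do check out. The paper's adjunction trick buys a shorter computation and avoids any explicit dual description of $r$; your route makes the geometric content of the equation $r\nu_a h=\nu_b$ fully explicit, at the cost of a longer preliminary calculation.
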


\begin{proof}
We start by showing that if $a \in M$ and $b, c \in L$, then
\begin{equation}\tag{ii}
b \le (r\nu_a h)(c) \Longleftrightarrow \phi(b) \cap f[\phi(a)] \subseteq \phi(c).  \label{*}
\end{equation}
To see this,
\begin{align*}
b \le (r\nu_a h)(c) &\Longleftrightarrow b \le r(a \to h(c)) \Longleftrightarrow h(b) \le a \to h(c) \\
&\Longleftrightarrow h(b) \wedge a \le h(c).
\end{align*}
Therefore, since $f[f^{-1}(B) \cap A] = B \cap f[A]$ for each $A, B$, by (\ref{**}) we have
\begin{align*}
b \le (r\nu_a h)(c) &\Longleftrightarrow \phi(h(b)) \cap \phi(a) \subseteq \phi(h(c)) \Longleftrightarrow f^{-1}(\phi(b)) \cap \phi(a) \subseteq f^{-1}(\phi(c)) \\
&\Longleftrightarrow f[f^{-1}(\phi(b)) \cap \phi(a)] \subseteq \phi(c) \Longleftrightarrow \phi(b) \cap f[\phi(a)] \subseteq \phi(c).
\end{align*}

(1)$\Rightarrow$(2). Let $U \in \ClopUp(X_M)$. Then $U = \phi(a)$ for some $a \in M$. By (1) and \cref{lem: nuclei corresponding to the image of a sublocale}, there is $b \in L$ with $r\nu_a h = \nu_b$. Since $1 = \nu_b(b)$, we have $1 \le (r\nu_a h)(b)$, so $\phi(1) \cap f[U] \subseteq \phi(b)$ by (\ref{*}). Therefore, $f[U] \subseteq \phi(b)$. For the reverse inclusion, let $y \in \phi(b)$. If $y \notin f[U]$, then since $f[U]$ is closed in $X_L$, there is a clopen set containing $y$ and missing $f[U]$. By \cref{lem: facts Priestley 1}, there are $c, d \in L $ with $y \in \phi(c) \setminus \phi(d)$ and $f[U] \cap (\phi(c) \setminus \phi(d)) = \varnothing$. Thus, $f[U] \cap \phi(c) \subseteq \phi(d)$, so $c \le (r\nu_a h)(d) = \nu_b(d) = b \to d$ by (\ref{*}). This gives $b \wedge c \le d$, and hence $\phi(b) \cap \phi(c) \subseteq \phi(d)$, a contradiction since $y \in \phi(b) \cap \phi(c)$ but $y \notin \phi(d)$. Therefore, $y \in f[U]$, and so $\phi(b) \subseteq f[U]$. Consequently, $f[U] = \phi(b)$, and so $f[U] \in \ClopUp(X_L)$.

(2)$\Rightarrow$(1). Let $a \in M$ and set $U = \phi(a)$. Then $U\in\ClopUp(X_M)$, so $f[U]\in\ClopUp(X_L)$ by (2). Therefore, there is $b \in L$ with $\phi(b) = f[U]$. If $c, d \in L$, then by (\ref{*}), 
\begin{align*}
c \le (r\nu_a h)(d) &\Longleftrightarrow \phi(c) \cap f[U] \subseteq \phi(d) \\
&\Longleftrightarrow \phi(c) \cap \phi(b) \subseteq \phi(d) \\
&\Longleftrightarrow c \wedge b \le d \\
&\Longleftrightarrow c \le b \to d \\
&\Longleftrightarrow c \le \nu_b(d). 
\end{align*}
Thus, $r\nu_a h = \nu_b$, and hence $r$ is open.
\end{proof}

We next give a dual characterization of when a frame homomorphism has a left adjoint.
Let $X$ be a Priestley space. Then we have two additional topologies on $X$, the topology of open upsets and the topology of open downsets. If $\Int_i$ and $\cl_i$ are the corresponding interior and closure operators ($i=1,2$), then it is well known (see, e.g., \cite[Lem.~6.5]{BBGK10}) that for $A \subseteq X$ we have:
\begin{eqnarray*}
\Int_1(A) = X \setminus \down (X \setminus\Int A ) & \mbox{ and } & \cl_1 A = \down\cl A; \\
\Int_2(A) = X \setminus \up (X \setminus\Int A ) & \mbox{ and } & \cl_2 A = \up\cl A.
\end{eqnarray*}

Let $L$ be a frame and let $a=\bigwedge S$ for $a\in L$ and $S\subseteq L$. Then $\phi(a) = \Int_1 \bigcap \{ \phi(s) : s \in S\}$ (see, e.g., \cite[Lem.~2.3]{BB08}). This will be used in the following lemma.

\begin{lemma} \label{lem: translation of h complete}
Let $h : L \to M$ be a frame homomorphism and $f: X_M \to X_L$ its Priestley dual. The following are equivalent:
\begin{enumerate}[$(1)$]
\item $h$ has a left adjoint.
\item $h$ preserves all meets.
\item $f^{-1}\Int_1 F = \Int_1 f^{-1}F$ for each closed upset $F \subseteq X_L$.
\item $\up f[U]$ is clopen for each clopen upset $U \subseteq X_M$.
\end{enumerate}
\end{lemma}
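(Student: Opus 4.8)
The plan is to establish $(1)\Leftrightarrow(2)$, then $(2)\Leftrightarrow(3)$, and finally $(1)\Leftrightarrow(4)$, the last equivalence carrying the geometric content. The equivalence $(1)\Leftrightarrow(2)$ is purely order-theoretic: a monotone map between complete lattices has a left adjoint iff it preserves all meets, and $h$, being a frame homomorphism, already preserves finite meets and the top element, so the only content is preservation of infinite meets; no duality is needed.

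For $(2)\Leftrightarrow(3)$ I would translate through the duality. By \cref{lem: facts Priestley 2}, the closed upsets of $X_L$ are exactly the sets $\bigcap\{\phi(s):s\in S\}$ for $S\subseteq L$, and likewise for $X_M$. Fix such an $F=\bigcap\{\phi(s):s\in S\}$. The quoted identity $\phi(\bigwedge S)=\Int_1\bigcap\{\phi(s):s\in S\}$ gives $\Int_1 F=\phi(\bigwedge S)$, hence $f^{-1}\Int_1 F=\phi(h(\bigwedge S))$ by \eqref{**}. On the other hand, $f^{-1}F=\bigcap\{\phi(h(s)):s\in S\}$ is a closed upset of $X_M$, being the preimage of a closed upset under the continuous order-preserving map $f$, so the same identity applied in $M$ yields $\Int_1 f^{-1}F=\phi(\bigwedge\{h(s):s\in S\})$. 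Since $\phi$ is injective, $(3)$ holds for every closed upset $F$ iff $h(\bigwedge S)=\bigwedge\{h(s):s\in S\}$ for every $S\subseteq L$, which is precisely $(2)$.

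The heart of the lemma is $(1)\Leftrightarrow(4)$: dually, the candidate left adjoint of $h$ is the assignment $U\mapsto\up f[U]$ on clopen upsets. Since $f[U]$ is closed (a continuous image of a compact set) and $\up$ of a closed set is closed by \cref{lem: facts Priestley 3}, the set $\up f[U]$ is always a closed upset of $X_L$, so $(4)$ says exactly that it is open. For $(4)\Rightarrow(1)$ I would define $\ell:M\to L$ by $\phi(\ell(b))=\up f[\phi(b)]$, which is legitimate by $(4)$ together with the fact that every clopen upset of $X_L$ equals $\phi$ of a unique element; the adjunction then reads $\ell(b)\le a\iff\up f[\phi(b)]\subseteq\phi(a)\iff f[\phi(b)]\subseteq\phi(a)\iff\phi(b)\subseteq f^{-1}\phi(a)=\phi(h(a))\iff b\le h(a)$, where we use that $\phi(a)$ is an upset and \eqref{**}.

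For $(1)\Rightarrow(4)$, let $\ell$ be the left adjoint of $h$; I claim $\up f[\phi(a)]=\phi(\ell(a))$ for every $a\in M$, which forces $\up f[\phi(a)]$ to be clopen. The inclusion $\up f[\phi(a)]\subseteq\phi(\ell(a))$ is immediate: if a prime filter $y$ contains $f(x)=h^{-1}(x)$ for some $x$ with $a\in x$, then $a\le h(\ell(a))$ gives $h(\ell(a))\in x$, so $\ell(a)\in h^{-1}(x)\subseteq y$. The reverse inclusion is the real obstacle: given a prime filter $y$ with $\ell(a)\in y$, I must construct a prime filter $x$ with $a\in x$ and $h^{-1}(x)\subseteq y$. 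The key point is that, since $h$ preserves finite joins and $L\setminus y$ is a prime ideal, the ideal of $M$ generated by $h[L\setminus y]$ equals $\{m\in M:m\le h(c)\text{ for some }c\notin y\}$; the principal filter $\up a$ of $M$ meets this ideal iff $a\le h(c)$ for some $c\notin y$, equivalently (by the adjunction) $\ell(a)\le c$ for some $c\notin y$, which is impossible since $\ell(a)\in y$. Hence $\up a$ is disjoint from that ideal, and the prime filter theorem yields a prime filter $x\supseteq\up a$ with $x\cap h[L\setminus y]=\varnothing$, i.e.\ $h^{-1}(x)\subseteq y$, as needed. This prime-filter-extension step is where I expect the work to concentrate; the remaining implications are routine bookkeeping with the duality dictionary.
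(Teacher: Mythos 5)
Your proposal is correct, and it differs from the paper's proof in how it closes the loop. The paper proves the cycle $(1)\Leftrightarrow(2)$, $(2)\Rightarrow(3)\Rightarrow(4)\Rightarrow(1)$; in particular it never proves $(1)\Rightarrow(4)$ directly, but obtains $(4)$ from $(3)$ by a short topological argument (setting $F=\up f[U]$ and using $U\subseteq \Int_1 f^{-1}F=f^{-1}\Int_1 F$ to conclude that the closed upset $\up f[U]$ is open). All the set-theoretic content is thereby delegated to the cited identity $\phi(\bigwedge S)=\Int_1\bigcap\{\phi(s):s\in S\}$. You instead prove $(2)\Leftrightarrow(3)$ and $(1)\Leftrightarrow(4)$ as two self-contained equivalences, and your $(1)\Rightarrow(4)$ carries the real work: the pointwise identity $\up f[\phi(a)]=\phi(\ell(a))$, whose nontrivial inclusion you establish by separating the filter $\up a$ from the ideal generated by $h[L\setminus y]$ via the prime filter theorem, using that $h$ preserves finite joins and that $\ell\dashv h$ rules out $a\le h(c)$ for $c\notin y$. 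That separation argument is sound (the complement of a prime filter is a prime ideal, so the generated ideal is just the downset of $h[L\setminus y]$), and it is essentially a from-scratch proof of the kind of fact the paper imports as a black box; the trade-off is that your route is more elementary and self-contained but re-proves Priestley-duality folklore, whereas the paper's route through $(3)$ is shorter and keeps the argument at the level of the interior operator $\Int_1$. Your $(2)\Leftrightarrow(3)$ agrees with the paper's $(2)\Rightarrow(3)$ in one direction and adds the (easy, injectivity-of-$\phi$) converse, which the paper does not need because of its cyclic arrangement. No gaps.
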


\begin{proof}
(1)$\Leftrightarrow$(2). This is well known (see, e.g., \cite[Prop.~7.34]{DP02}).

(2)$\Rightarrow$(3). Let $F$ be a closed upset of $X_L$. We may write $F = \bigcap \{ \phi(s) : s \in S\}$ for some $S \subseteq L$ (see \cref{lem: facts Priestley 2}). 
By (\ref{**}), 
\[
f^{-1}(F) = f^{-1}\left( \bigcap \{ \phi(s) : s \in S\} \right) = \bigcap \{ f^{-1}(\phi(s)) : s \in S\} = \bigcap \{ \phi(h(s)) : s \in S\},
\]
so
\[
\Int_1 f^{-1}(F) = \Int_1 \bigcap \{ \phi(h(s)) : s \in S\} = \phi\left(\bigwedge h[S]\right). 
\]
On the other hand, since
\[
\Int_1 F = \Int_1 \bigcap \{ \phi(s) : s \in S\} = \phi\left(\bigwedge S\right), 
\]
using (\ref{**}) again yields
\[
f^{-1}(\Int_1 F) = f^{-1}\left(\phi\left(\bigwedge S\right)\right) = \phi\left(h\left(\bigwedge S\right)\right).
\]
Therefore, 
by (2) we have
\begin{align*}
\Int_1 f^{-1}(F) = \phi\left(\bigwedge h[S]\right) = \phi\left(h\left(\bigwedge S\right)\right) = f^{-1}(\Int_1 F).
\end{align*}

(3)$\Rightarrow$(4). Let $U \in \ClopUp(X_M)$ and set $F = \up f[U]$. By \cref{lem: facts Priestley 3}, $F$ is a closed upset of $Y$. By~(3),
\[
U \subseteq \Int_1 f^{-1}(f[U]) \subseteq \Int_1 f^{-1}F = f^{-1}\Int_1 F,
\]
so $f[U] \subseteq \Int_1 F$, and hence $\up f[U] \subseteq \Int_1 F = \Int_1 \up f[U]$. Thus, $\up f[U]$ is clopen.

(4)$\Rightarrow$(1). Let $a \in M$. By (4), $\up f[\phi(a)] \in \ClopUp(X_L)$. Therefore, there is a unique $b \in L$ such that $\phi(b)=\up f[\phi(a)]$. Letting $\ell(a)=b$ defines a function $\ell : M \to L$ such that
\begin{equation}
\phi(\ell(a))=\up f[\phi(a)]. \label{***} \tag{iii}
\end{equation}
To see that $\ell$ is left adjoint to $h$, let $c \in L$. Since $\phi(c)$ is an upset, by (\ref{**}) we have
\begin{align*}
\ell(a) \le c &\Longleftrightarrow \phi(\ell(a)) \subseteq \phi(c) \Longleftrightarrow \up f[\phi(a)] \subseteq \phi(c) \Longleftrightarrow f[\phi(a)] \subseteq \phi(c)\\
&\Longleftrightarrow \phi(a) \subseteq f^{-1}(\phi(c)) \Longleftrightarrow \phi(a) \subseteq \phi(h(c)) \Longleftrightarrow a \le h(c). \qedhere
\end{align*}
\end{proof}

We recall (see, e.g., \cite[p.~9]{Esa19}) that a map $f:X\to Y$ between posets is a {\em bounded morphism} or a {\em p-morphism} if 
$\down f^{-1}(y)=f^{-1}(\down y)$ for each $y\in Y$. Let $h:L\to M$ be a frame homomorphism between frames and $f:X_M\to X_L$ its Priestley dual. Then $f$ is an $L$-morphism. It follows from Esakia duality for Heyting algebras \cite{Esa74, Esa19} that $h$ preserves $\to$ iff $f$ is a p-morphism. This together with \cref{lem: translation of h complete} yields:

\begin{lemma} \label{lem: CHA dually}
Let $h : L \to M$ be a frame homomorphism with Priestley dual $f:X_M\to X_L$. Then $h$ is a complete Heyting homomorphism iff $f$ is a p-morphism and $\up f[U]$ is clopen for each clopen upset $U$ of $X_M$.
\end{lemma}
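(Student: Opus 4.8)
The plan is to unravel the definition of a complete Heyting homomorphism and match each ingredient against a dual condition that has already been established. Since $L$ and $M$ are frames, they are complete Heyting algebras, and $h$ being a frame homomorphism already guarantees that $h$ preserves finite meets, arbitrary joins, $0$, and $1$. Hence $h$ is a complete Heyting homomorphism precisely when, in addition, it preserves the Heyting implication $\to$ and preserves arbitrary meets. So the task reduces to dualizing these two remaining requirements separately and then conjoining the results.

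For the preservation of $\to$, I would invoke Esakia duality for Heyting algebras exactly as recalled in the paragraph preceding the statement: a Heyting algebra homomorphism corresponds under Esakia duality to a p-morphism between Esakia spaces. Since every $L$-space is an Esakia space and $f$ is the Priestley (hence Esakia) dual of $h$, this yields that $h$ preserves $\to$ iff $f$ is a p-morphism. For the preservation of arbitrary meets, I would cite the equivalence (2)$\Leftrightarrow$(4) of \cref{lem: translation of h complete}: $h$ preserves all meets iff $\up f[U]$ is clopen for every clopen upset $U$ of $X_M$.

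Combining the two observations, $h$ is a complete Heyting homomorphism iff $h$ preserves $\to$ and preserves all meets, iff $f$ is a p-morphism and $\up f[U]$ is clopen for each clopen upset $U$ of $X_M$, which is the assertion. There is no real obstacle here; the only mildly delicate points are pinning down the definition of a complete Heyting homomorphism as a frame homomorphism that additionally preserves $\to$ and arbitrary meets, and checking that Esakia duality is legitimately applicable—i.e.\ that $f$ is genuinely a morphism of Esakia spaces—which is ensured by the remark that every $L$-space is an Esakia space together with the fact that Pultr–Sichler duality is obtained by restricting Priestley duality.
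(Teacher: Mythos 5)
Your proposal is correct and follows exactly the route the paper takes: decompose ``complete Heyting homomorphism'' into preservation of $\to$ (dual to $f$ being a p-morphism, via Esakia duality) and preservation of arbitrary meets (dual to $\up f[U]$ being clopen, via the equivalence (2)$\Leftrightarrow$(4) of \cref{lem: translation of h complete}). The paper treats this as an immediate consequence of the preceding discussion rather than giving a separate proof, and your write-up supplies the same argument in slightly more detail.
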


We next provide a dual characterization of the Frobenius condition $\ell(a \wedge h(b)) = \ell(a) \wedge b$ for each $a \in M$ and $b \in L$. 

\begin{lemma} \label{lem: translation of Frobenius}
Let $h : L \to M$ be a frame homomorphism with Priestley dual $f:X_M\to X_L$. The following are equivalent:
\begin{enumerate}[$(1)$]
\item $h$ has a left adjoint $\ell$ and $\ell(a \wedge h(b)) = \ell(a) \wedge b$ for each $a \in M$ and $b \in L$.
\item $\up f[U]$ is clopen and $\up(f[U] \cap V) = \up f[U] \cap V$ for each $U \in \ClopUp(X_M)$ and $V \in \ClopUp(X_L)$.
\end{enumerate}
\end{lemma}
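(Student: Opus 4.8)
The plan is to read this off \cref{lem: translation of h complete}. That lemma shows that ``$\up f[U]$ is clopen for each $U \in \ClopUp(X_M)$'' is equivalent to ``$h$ has a left adjoint,'' and, via (\ref{***}), that in this case the left adjoint $\ell$ is determined by $\phi(\ell(a)) = \up f[\phi(a)]$. Since this condition appears in both (1) and (2), I may assume it throughout, and then it only remains to check that, under it, the Frobenius identity $\ell(a \wedge h(b)) = \ell(a) \wedge b$ for all $a \in M$, $b \in L$ is equivalent to $\up(f[U] \cap V) = \up f[U] \cap V$ for all $U \in \ClopUp(X_M)$, $V \in \ClopUp(X_L)$.

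To do this I would fix $a \in M$ and $b \in L$ and set $U = \phi(a)$, $V = \phi(b)$; as $\phi$ restricts to order isomorphisms $M \cong \ClopUp(X_M)$ and $L \cong \ClopUp(X_L)$, these $U$ and $V$ are completely arbitrary clopen upsets. On the one hand, since $\phi$ preserves finite meets, (\ref{***}) gives
\[
\phi(\ell(a) \wedge b) = \phi(\ell(a)) \cap \phi(b) = \up f[\phi(a)] \cap \phi(b) = \up f[U] \cap V.
\]
On the other hand, using that $\phi$ preserves finite meets, that $\phi(h(b)) = f^{-1}(\phi(b))$ by (\ref{**}), the elementary identity $f[A \cap f^{-1}(B)] = f[A] \cap B$ (already used in the proof of \cref{lem: translation of r open}), and (\ref{***}) once more,
\[
\phi(\ell(a \wedge h(b))) = \up f[\phi(a \wedge h(b))] = \up f[\phi(a) \cap f^{-1}(\phi(b))] = \up(f[\phi(a)] \cap \phi(b)) = \up(f[U] \cap V).
\]
Since $\phi$ is injective, $\ell(a \wedge h(b)) = \ell(a) \wedge b$ holds iff $\up(f[U] \cap V) = \up f[U] \cap V$, and letting $a, b$ (equivalently $U, V$) vary yields the desired equivalence; combined with (1)$\Leftrightarrow$(4) of \cref{lem: translation of h complete}, this gives (1)$\Leftrightarrow$(2).

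I do not expect a genuine obstacle: the argument is essentially bookkeeping, transporting the two sides of the Frobenius equation across Priestley duality. The one place to be careful is the chain computing $\phi(\ell(a \wedge h(b)))$ — one must apply $\up$ only \emph{after} forming the image $f[\,\cdot\,]$, since the image of an upset under an order-preserving map need not be an upset, and one must invoke $f[A \cap f^{-1}(B)] = f[A] \cap B$ to collapse $f[\phi(a) \cap f^{-1}(\phi(b))]$ to $f[\phi(a)] \cap \phi(b)$. Everything else follows from $\phi$ being an order isomorphism onto the clopen upsets together with the duality identities (\ref{**}) and (\ref{***}).
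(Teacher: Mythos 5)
Your proof is correct and follows essentially the same route as the paper's: invoke \cref{lem: translation of h complete} for the left-adjoint part, then transport both sides of the Frobenius identity through $\phi$ using (\ref{**}), (\ref{***}), and $f[A \cap f^{-1}(B)] = f[A] \cap B$, concluding by injectivity of $\phi$. (Your assignment $U = \phi(a)$, $V = \phi(b)$ is the correct one; the paper's proof states it with $a$ and $b$ swapped in the ``Letting'' clause, an evident typo given the computation that follows.)
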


\begin{proof}
By \cref{lem: translation of h complete}, $h$ has a left adjoint $\ell$ iff $\up f[U]$ is clopen for each $U \in \ClopUp(X_M)$. It is left to show that $\ell(a \wedge h(b)) = \ell(a) \wedge b$ for each $a \in M$ and $b \in L$ iff $\up(f[U] \cap V) = \up f[U] \cap V$ for each $U \in \ClopUp(X_M)$ and $V \in \ClopUp(X_L)$. Letting $U = \phi(b)$ and $V = \phi(a)$, since $\up f[U] = \phi(\ell(a))$ by (\ref{***}), we have
\[
\phi(\ell(a) \wedge b) = \phi(\ell(a)) \cap \phi(b) = \up f[U] \cap V.
\]
On the other hand, since $f[U \cap f^{-1}(V)] = f[U] \cap V$, by (\ref{**}) we have
\begin{align*}
\phi(\ell(a \wedge h(b))) &= \up f[\phi(a \wedge h(b))] = \up f[\phi(a) \cap f^{-1}(\phi(b))] \\
& = \up f[U \cap f^{-1}(V)] = \up(f[U] \cap V).
\end{align*}
Thus, 
\begin{align*}
\ell(a \wedge h(b)) = \ell(a) \wedge b & \Longleftrightarrow \phi(\ell(a \wedge h(b))) = \phi(\ell(a) \wedge b) \\
& \Longleftrightarrow \up(f[U] \cap V) = \up f[U] \cap V. \qedhere
\end{align*}
\end{proof}

We thus have translated the three conditions of \cref{thm: JT} into the dual conditions in the language of Priestley spaces. We next prove that the translated conditions are equivalent.

\begin{theorem} \label{thm: JT in Priestley terms}
Let $f : X \to Y$ be a Priestley morphism between $L$-spaces. 
The following are equivalent:
\begin{enumerate}[$(1)$]
\item If $U\in\ClopUp(X)$, then $f[U]\in\ClopUp(Y)$.
\item $f$ is a p-morphism and $\up f[U]$ is clopen for each $U\in\ClopUp(X)$.
\item $\up f[U]$ is clopen and $\up(f[U] \cap V) = \up f[U] \cap V$ for each $U \in \ClopUp(X)$ and $V \in \ClopUp(Y)$.
\end{enumerate}
\end{theorem}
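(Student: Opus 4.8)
The plan is to prove the cycle $(1)\Rightarrow(2)\Rightarrow(3)\Rightarrow(1)$. Two standing facts will be used throughout. First, since a Priestley space is compact and Hausdorff, $f[U]$ is closed whenever $U$ is clopen, being the continuous image of a compact set; so $f[U]$ is a clopen upset iff $f[U]=\up f[U]$. Second, because $f$ is order-preserving we always have $f[\up x]\subseteq \up f(x)$, and a routine reformulation of the definition shows that $f$ is a p-morphism precisely when the reverse inclusion $\up f(x)\subseteq f[\up x]$ holds for every $x\in X$.

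For $(1)\Rightarrow(2)$: if every $f[U]$ with $U\in\ClopUp(X)$ is a clopen upset, then $\up f[U]=f[U]$ is clopen. To see that $f$ is a p-morphism, fix $x\in X$. By \cref{lem: facts Priestley 2} we have $\up x=\bigcap\{U\in\ClopUp(X):x\in U\}$, so \cref{lem: facts Priestley 4} gives $f[\up x]=\bigcap\{\,f[U]: x\in U\in\ClopUp(X)\,\}$; each such $f[U]$ is, by (1), a clopen upset containing $f(x)$, hence contains $\up f(x)$, and therefore $\up f(x)\subseteq f[\up x]$. For $(2)\Rightarrow(3)$: a p-morphism sends upsets to upsets, since if $f(x)\in f[U]$ with $x\in U\in\ClopUp(X)$ and $f(x)\le y'$, the p-morphism condition gives $x'\ge x$ with $f(x')=y'$, and $x'\in U$ as $U$ is an upset, so $y'\in f[U]$. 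Hence $\up f[U]=f[U]$, which is clopen by hypothesis, and for $V\in\ClopUp(Y)$ the set $f[U]\cap V$ is already an upset, so $\up(f[U]\cap V)=f[U]\cap V=\up f[U]\cap V$.

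The substantial implication is $(3)\Rightarrow(1)$. Fix $U\in\ClopUp(X)$; since $\up f[U]$ is clopen it suffices to prove $\up f[U]\subseteq f[U]$. Let $y\in\up f[U]$. For each $V\in\ClopUp(Y)$ with $y\in V$ we have $y\in\up f[U]\cap V=\up(f[U]\cap V)$, so there is $z\in f[U]\cap V$ with $z\le y$; thus $f[U]\cap V\cap\down y\neq\varnothing$. Each of these sets is closed (using \cref{lem: facts Priestley 3} for $\down y$ and compactness for $f[U]$), and the clopen upsets of $Y$ containing $y$ form a family closed under finite intersection, so by compactness of $Y$ the intersection $\bigcap_V(f[U]\cap V\cap\down y)$ over all such $V$ is nonempty. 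Since $\bigcap_V V=\up y$ by \cref{lem: facts Priestley 2}, this intersection equals $f[U]\cap\up y\cap\down y=f[U]\cap\{y\}$, whence $y\in f[U]$.

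The main obstacle is exactly this last step: conditions (2) and (3) only assert things about the \emph{upward closures} of the sets $f[U]$, and one must upgrade such a statement to the much stronger conclusion that $f[U]$ is \emph{itself} an upset. The device that achieves this is the compactness argument above, which traps the point $y$ as the unique element of $\up y\cap\down y$ lying in every clopen neighbourhood of $y$; I expect the only place requiring genuine care to be the verification that the relevant family of closed sets has the finite intersection property and the identification of its intersection with $\{y\}$.
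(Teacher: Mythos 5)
Your proof is correct, and its overall architecture --- the cycle $(1)\Rightarrow(2)\Rightarrow(3)\Rightarrow(1)$, with $(1)\Rightarrow(2)$ resting on \cref{lem: facts Priestley 2} and \cref{lem: facts Priestley 4}, and $(2)\Rightarrow(3)$ reduced to the observation that a p-morphism sends upsets to upsets --- coincides with the paper's. The only genuine divergence is in $(3)\Rightarrow(1)$, which both you and the paper correctly identify as the substantial step. The paper argues by contradiction: assuming $f[U]$ is not an upset, it picks $y\in\up f[U]\setminus f[U]$, notes that $y\notin\down(\down y\cap f[U])$, and applies the separation statement of \cref{lem: facts Priestley 2} once to produce a \emph{single} clopen upset $V\ni y$ with $V\cap\down y\cap f[U]=\varnothing$, which immediately violates $\up(f[U]\cap V)=\up f[U]\cap V$. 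You instead argue directly: for $y\in\up f[U]$ you extract from condition (3) the nonemptiness of every set $f[U]\cap V\cap\down y$ with $y\in V\in\ClopUp(Y)$, check the finite intersection property, and use compactness together with $\bigcap\{V\in\ClopUp(Y):y\in V\}=\up y$ and antisymmetry to trap $y$ inside $f[U]$. The two arguments are essentially dual: yours trades the single application of the Priestley separation axiom for a compactness argument over the whole filter of clopen-upset neighbourhoods of $y$. The paper's version is a line or two shorter; yours is constructive in spirit (no contradiction) and makes explicit the mechanism by which information about upward closures is upgraded to membership in $f[U]$ itself. All the auxiliary facts you invoke (closedness of $f[U]$ and of $\down y$, the identification of $\bigcap V$ with $\up y$, the finite intersection property) check out.
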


\begin{proof}
(1)$\Rightarrow$(2). Let $U \in \ClopUp(X)$. By (1), $f[U]$ is an upset of $Y$, so $\up f[U] = f[U]$. Therefore, $\up f[U]$ is clopen in $Y$ by (1). It is left to prove that $f$ is a p-morphism. For this it suffices to show that $f(\up x)$ is an upset for each $x\in X$ (see, e.g, \cite[Prop~1.4.12]{Esa19}). By \cref{lem: facts Priestley 2}, $\up x = \bigcap \{ U \in \ClopUp(X) : x \in U\}$, so by \cref{lem: facts Priestley 4}, 
\[
f[\up x] = f\left[ \bigcap \{ U \in \ClopUp(X) : x \in U\} \right] = \bigcap \{ f[U] : x \in U \in \ClopUp(X) \}.
\] 
Thus, $f[\up x]$ is an upset by (1).

(2)$\Rightarrow$(3). It is sufficient to show that $\up(f[U] \cap V) = \up f[U] \cap V$ for each $U \in \ClopUp(X)$ and $V \in \ClopUp(Y)$. But since $f$ is a p-morphism, $\up f[U] = f[U]$, so $\up f[U] \cap V = f[U] \cap V = \up(f[U] \cap V)$ because $f[U] \cap V$ is an upset.

(3)$\Rightarrow$(1). It suffices to show that $f[U]$ is an upset. If not, then there exist $x \in U$ and $y \in Y$ with $f(x) \le y$ but $y \notin f[U]$. This yields $y \notin \down (\down y \cap f[U])$, so there is a clopen upset $V$ of $Y$ such that $y \in V$ and $V \cap \down y \cap f[U] = \varnothing$ (see \cref{lem: facts Priestley 2}). Therefore, $y \notin \up (f[U] \cap V)$ but $y \in \up f[U] \cap V$, 
a contradiction to (3). Thus, $f[U]$ is an upset.
\end{proof}

By \cref{lem: translation of r open,lem: CHA dually,lem: translation of Frobenius}, the three conditions of \cref{thm: JT in Priestley terms} are equivalent to the corresponding three conditions of \cref{thm: JT}. Hence, the Joyal-Tierney Theorem is a consequence of \cref{thm: JT in Priestley terms}. We conclude this section with the following observation.

\begin{remark}
Condition (1) of \cref{thm: JT in Priestley terms} is equivalent to: 
\begin{enumerate}
\item[(1$^\prime$)] If $U$ is an open upset of $X$, then $f[U]$ is an open upset of $Y$.
\end{enumerate}
Clearly (1$^\prime$) implies (1) since if $U$ is clopen, then $f[U]$ is closed, hence a clopen upset of $Y$ by (1$^\prime$). Conversely, if $U$ is an open upset, then $U = \bigcup \{ V \in \ClopUp(X) : V \subseteq U\}$ by \cref{lem: facts Priestley 2}. Therefore, $f[U] = \bigcup \{ f[V] : V \in \ClopUp(X), \ V \subseteq U\}$ is a union of clopen upsets of $Y$ by (1). Thus, $f[U]$ is an open upset of $Y$. Consequently, (1) is equivalent to $f$ being an open map with respect to the open upset topologies. 

On the other hand, this does not imply that $f$ is an open map with respect to the Stone topologies. To see this, we use the space defined in \cite[p.~32]{Bez99}.  Let $X$ be the 2-point compactification of the discrete space $\{ x_n, z_n : n \in \mathbb{N}\}$ with $\omega$ the limit point of $\{x_n : n \in \mathbb{N}\}$ and $\omega'$ the limit point of $\{ z_n : n \in \mathbb{N} \}$. Also, let $Y$ be the 1-point compactification of the discrete space $\{ y_n : n \in \mathbb{N} \}$. We order $X$ and $Y$ and define the map $f : X \to Y$ as shown in the diagram below. 
\begin{center}
\begin{tikzpicture}[scale=.7]
\newcommand{\sep}{6}
\newcommand{\pre}{1}
\newcommand{\init}{1}
\node [above] at (0,-2) {$X$};
\draw [fill] (1,0) circle[radius=.05];
\draw [fill] (0,1) circle[radius=.05];
\draw [fill] (0,1.75) circle[radius=.02];
\draw [fill] (0,2) circle[radius=.02];
\draw [fill] (0,2.25) circle[radius=.02];
\draw [fill] (0,3) circle[radius=.05];
\draw [fill] (1,3) circle[radius=.05];
\draw [fill] (0,4) circle[radius=.05];
\draw [fill] (0,5) circle[radius=.05];
\draw [fill] (1,5) circle[radius=.05];
\draw [fill] (0,6) circle[radius=.05];
\draw [fill] (0,7) circle[radius=.05];
\draw [fill] (1,7) circle[radius=.05];
\draw (1,0) -- (0,1);
\draw (0,3) -- (0,7);
\draw (1,3) -- (0,4);
\draw (1,5) -- (0,6);
\node [right] at (1.5,0) {$\omega'$};
\node [left] at (-.5,1) {$\omega$};
\node[right] at (1.5,3) {$z_2$};
\node [left] at (-\pre,3) {$x_4$};
\node [left] at (-\pre,4) {$x_3$};
\node [left] at (-\pre,5) {$x_2$};
\node [left] at (-\pre,6) {$x_1$};
\node [left] at (-\pre,7) {$x_0$};
\node [right] at (1.5,5) {$z_1$};
\node [right] at (1.5,7) {$z_0$};
\draw [fill] (1,1.25) circle[radius=.02];
\draw [fill] (1,1.5) circle[radius=.02];
\draw [fill] (1,1.75) circle[radius=.02];
\draw [rotate=45] (.70,0) ellipse (0.5 and 1.0);
\draw (.5,3) ellipse (1.0 and 0.4);
\draw (.5,5) ellipse (1.0 and 0.4);
\draw (.5,7) ellipse (1.0 and 0.4);
\node [above] at (\sep, -2) {$Y$};
 \draw [fill] (\sep,0) circle[radius=.05];
 \draw [fill] (\sep,1.25) circle[radius=.02];
\draw [fill] (\sep,1.5) circle[radius=.02];
\draw [fill] (\sep,1.75) circle[radius=.02];
\draw [fill] (\sep,3) circle[radius=.05]; 
\draw [fill] (\sep,4) circle[radius=.05];
\draw [fill] (\sep,5) circle[radius=.05];
\draw [fill] (\sep,6) circle[radius=.05];
\draw [fill] (\sep,7) circle[radius=.05];
\node [right] at (.5+\sep,0) {$\infty$};
\node [right] at (.5+\sep, 3) {$y_4$};
\node [right] at (.5+\sep, 4) {$y_3$};
\node [right] at (.5+\sep, 5) {$y_2$};
\node [right] at (.5+\sep, 6) {$y_1$};
\node [right] at (.5+\sep, 7) {$y_0$};
\draw (\sep, 3) -- (\sep, 7);
\node [above] at (4,-1) {$f$};
\draw [->] (2 + \init,0) -- (\sep - \pre,0);
\draw [->] (2 + \init,3) -- (\sep - \pre,3);
\draw [->] (2 + \init,4) -- (\sep - \pre,4);
\draw [->] (2 + \init,5) -- (\sep - \pre,5);
\draw [->] (2 + \init,6) -- (\sep - \pre,6);
\draw [->] (2 + \init,7) -- (\sep - \pre,7);
\end{tikzpicture}
\end{center}
It is straightforward to see that $X$ and $Y$ are $L$-spaces and $f$ is an $L$-morphism such that $f[U]$ is a clopen upset of $Y$ for each clopen upset $U$ of $X$. However, $f$ is not an open map since $U := \{ z_n : n \in \mathbb{N}\} \cup \{\omega'\}$ is an open subset of $X$ whose image $\{ y_{2n} : n \in \mathbb{N} \} \cup \{\infty\}$ is not an open subset of $Y$.
\end{remark}

\section{The subfit case}

As was shown in \cite[Prop.~V.1.8]{PP12}, if in the Joyal-Tierney Theorem we assume that $L$ is subfit, then the localic map $r:M\to L$ is open iff its left adjoint $h:L\to M$ is a complete lattice homomorphism (so $h$ being a Heyting homomorphism becomes redundant). We will give an alternative proof of this result in the language of Priestley spaces. 

We recall that a frame $L$ is {\em subfit} if for all $a,b\in L$ we have
\[
a\not\le b \Longrightarrow (\exists c \in L)(a\vee c = 1 \mbox{ but } b\vee c\ne 1).
\]
We next give a dual characterization of when $L$ is subfit. As usual, for a poset $X$ we write $\min X$ for the set of minimal elements of $X$.

\begin{lemma}
Let $L$ be a frame and $X_L$ its Priestley space. Then $L$ is subfit iff $\min X_L$ is dense in $X_L$.
\end{lemma}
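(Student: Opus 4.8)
The plan is to work directly with the Stone map $\phi$ and the order-theoretic description of meets and joins in $X_L$. Recall that $X_L$ is the Priestley space of prime filters of $L$, and that a prime filter $x$ is minimal precisely when it contains no prime filter properly; equivalently (this is the standard fact for frames/distributive lattices) $x \in \min X_L$ iff the complement $L \setminus x$ is a prime ideal that is maximal among prime ideals, i.e. iff $x$ corresponds to a point of the frame in the sense that $\bigwedge x$ is... no — I should instead use the translation that is already available via $\phi$.

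First I would translate subfitness. The condition $a \not\le b \Rightarrow (\exists c)(a\vee c = 1,\ b \vee c \ne 1)$ can be rewritten using $\phi$: we have $\phi(1) = X_L$, and $a \vee c = 1$ iff $\phi(a) \cup \cl_1(\phi(c)) \supseteq$... more cleanly, $a \vee c = 1$ iff $\up(\phi(a)\cup\phi(c)) $ is dense, but even more simply $a\vee c = 1$ iff $\phi(a)\cup\phi(c)$ is not contained in any proper closed upset, iff $X_L \setminus (\phi(a)\cup\phi(c))$ contains no point of $\min X_L$ — indeed a proper closed upset of $X_L$ must miss some minimal point, and conversely the complement of any single point's upset... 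Let me instead phrase it as: $a \vee c = 1$ iff every prime filter contains $a$ or $c$, iff $\phi(a) \cup \phi(c) = X_L$. So subfitness says: whenever $\phi(a) \not\subseteq \phi(b)$, there is $c$ with $\phi(a)\cup\phi(c) = X_L$ but $\phi(b)\cup\phi(c) \ne X_L$.

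Now for the forward direction, assume $L$ is subfit and suppose $\min X_L$ is not dense; then there is a nonempty clopen $W$ with $W \cap \min X_L = \varnothing$, and by \cref{lem: facts Priestley 1} we may shrink $W$ to a nonempty set of the form $\phi(a)\setminus\phi(b)$ disjoint from $\min X_L$, so $\phi(a)\not\subseteq\phi(b)$, i.e. $a\not\le b$. Pick $c$ as in subfitness: $\phi(a)\cup\phi(c) = X_L$ and $\phi(b)\cup\phi(c) \ne X_L$, so there is $y \notin \phi(b)\cup\phi(c)$; then $y \in \phi(a)$ since $\phi(a)\cup\phi(c) = X_L$, so $y \in \phi(a)\setminus\phi(b)$, which is disjoint from $\min X_L$, so $\down y \cap \min X_L \ne \varnothing$ — pick a minimal $x \le y$. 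Since $x \notin \phi(c)$ (as $\phi(c)$ is an upset missing $y$) and $x\notin\phi(b)$ (ditto), but $x \in \phi(a)\cup\phi(c) = X_L$ forces... wait, $x$ could fail to be in $\phi(a)$. The point is to derive a contradiction with $\phi(a)\setminus\phi(b)$ being minimal-point-free; but $x$ need not lie in $\phi(a)$. I expect the correct argument chooses $c$ more carefully or uses that $\phi(a)\setminus\phi(b)$ is an open upset intersected with... Actually the right move: replace $\phi(a)\setminus\phi(b)$ by a clopen upset below it is impossible, so instead use that a minimal point $x$ with $x \le y$ and $y \in \phi(a)\setminus\phi(b)$: we want $x \in \phi(a)\setminus\phi(b)$ too, which needs $x \in \phi(a)$, i.e. $a \in x$. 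This holds if $\phi(a)$ were a downset near $y$, which it is not.

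The conceptual fix, and the step I expect to be the main obstacle: one should use the characterization of subfitness in terms of the open-upset topology, namely that $L$ subfit iff in $X_L$ every nonempty open upset meets $\min X_L$, equivalently $\min X_L$ is dense in the open-upset topology $\tau_1$; then one separately argues that a subset is $\tau_1$-dense iff it is dense in the Stone topology, using \cref{lem: facts Priestley 2} (every open set contains, near each of its points, a set of the form $U \setminus V$ with $U,V$ clopen upsets, and $\up(U\setminus V)$ relates to $U$). For the reverse direction, assume $\min X_L$ dense and take $a \not\le b$, so there is a prime filter $x$ with $a \in x$, $b \notin x$, i.e. $x \in \phi(a)\setminus\phi(b)$; pick a minimal point $x_0 \le x$ (exists by \cref{lem: facts Priestley 3}, $\down x$ closed, Zorn), and note $x_0 \notin \phi(b)$ since $\phi(b)$ is an upset missing... no, $x_0 \le x$ and $x \notin \phi(b)$ does not give $x_0 \notin \phi(b)$. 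So instead: since $x \in \phi(a)$, the open upset $\Int_1\phi(a) \supseteq$... hmm, $\phi(a)$ is already a clopen upset, so $x \in \phi(a)$ and we want a minimal point in $\phi(a)\setminus\phi(b)$; but $\phi(a)\setminus\phi(b)$ need not be an upset. I would therefore instead build $c$ directly: let $c$ correspond to the clopen upset $X_L \setminus \down(\phi(a)\setminus\phi(b))$... and check $\phi(a)\cup\phi(c)=X_L$ while density of $\min X_L$ forces $\phi(b)\cup\phi(c)\ne X_L$. The main obstacle throughout is handling the asymmetry that $\phi(a)\setminus\phi(b)$ is not an upset, which I expect the authors resolve via the identities $\cl_1 A = \down\cl A$ and the $\Int_1$/$\bigwedge$ correspondence recalled just before \cref{lem: translation of h complete}; I would organize the whole proof around those two identities rather than around individual points.
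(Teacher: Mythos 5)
Your proposal has genuine gaps in both directions, though of different severity. For the direction ``subfit implies $\min X_L$ dense'' you in fact had a complete argument and then talked yourself out of it: having chosen $y\notin\phi(b)\cup\phi(c)$ (so $y\in\phi(a)$) and a minimal $x\le y$, you correctly note $x\notin\phi(c)$ and $x\notin\phi(b)$ because these are upsets; but then $\phi(a)\cup\phi(c)=X_L$ \emph{does} force $x\in\phi(a)$, since $x\notin\phi(c)$. Hence $x\in(\phi(a)\setminus\phi(b))\cap\min X_L$, contradicting the choice of $\phi(a)\setminus\phi(b)$. (This step also silently uses that every point of a Priestley space lies above a minimal point; the paper invokes the equivalent fact $\up\min X_L=X_L$ rather than descending to a minimal point.) So that half only needed you to finish the computation you started.

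The direction ``$\min X_L$ dense implies subfit'' is genuinely incomplete. The witness $c$ you propose, with $\phi(c)=X_L\setminus\down(\phi(a)\setminus\phi(b))$, fails twice over: that set is only an open upset (the complement of a closed downset), so it need not be of the form $\phi(c)$; and $\phi(a)\cup\bigl(X_L\setminus\down(\phi(a)\setminus\phi(b))\bigr)=X_L$ is false in general, since a point strictly below $\phi(a)\setminus\phi(b)$ and outside $\phi(a)$ lies in neither set. You also never use density where it is actually needed: density hands you a \emph{minimal} point $x$ directly inside the nonempty clopen set $\phi(a)\setminus\phi(b)$, so there is no need to descend from an arbitrary point of that set (which indeed cannot work, as $\phi(a)$ is an upset). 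The paper's construction then takes the open upset $X_L\setminus\{x\}$ (an upset because $x$ is minimal), writes it as a union of clopen upsets via \cref{lem: facts Priestley 2}, and uses compactness to shrink it to a single clopen upset $U'=\phi(c)$ with $\phi(a)\cup U'=X_L$ and $x\notin U'\cup\phi(b)$, giving $a\vee c=1$ and $b\vee c\ne 1$. This compactness-shrinking step is the one idea missing from your proposal; the detour through the $\tau_1$-topology and the identities $\cl_1A=\down\cl A$ that you sketch at the end is not needed and is not how the paper proceeds.
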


\begin{proof}
First suppose that $\min X_L$ is dense in $X_L$. Let $a, b \in L$ with $a \not\le b$. Then $\phi(a) \not\subseteq \phi(b)$, so $\phi(a) \setminus \phi(b)$ is a nonempty clopen subset of $X$. Therefore, there is $x \in (\phi(a) \setminus \phi(b)) \cap \min X_L$. Let $U = X_L \setminus \{x\}$. Then $U$ is an open upset of $X_L$. Since $\phi(a) \cup U = X_L$ and $U$ is a union of clopen upsets (see \cref{lem: facts Priestley 2}), compactness of $X_L$ implies that there is a clopen upset $U' \subseteq U$ with $\phi(a) \cup U' = X_L$. Because $U'=\phi(c)$ for some $c \in L$, we have $a \vee c = 1$. On the other hand, since $\phi(b) \cup U' \ne X_L$, it follows that $b \vee c \ne 1$. Thus, $L$ is subfit.

Conversely, suppose that $\min X_L$ is not dense in $X_L$. Then there is a nonempty clopen subset $A$ of $X_L$ such that $A \cap \min X_L = \varnothing$. We may assume that $A = U \setminus V$, where $U \not\subseteq V$ are clopen upsets of $X_L$ (see \cref{lem: facts Priestley 1}). From $A \cap \min X_L = \varnothing$ it follows that $U \cap \min X_L \subseteq V$. Let $a, b \in L$ be such that $U = \phi(a)$ and $V = \phi(b)$. Since $U\not\subseteq V$, we have $a\not\le b$. Suppose $c \in L$ is such that $a \vee c = 1$. 
Let $W = \phi(c)$. Then $U \cup W = X_L$, so $\min X_L \subseteq U \cup W$. Because $U \cap \min X_L \subseteq V$, this yields $\min X_L \subseteq V \cup W$, which forces $V \cup W = X_L$ because $\up \min X_L = X_L$ (see, e.g.,  \cite[Thm.~3.2.1]{Esa19}). Thus, $b \vee c = 1$, and hence $L$ is not subfit.
\end{proof}

\begin{lemma} \label{lem: subfit implies p-morphism}
Let $f : X \to Y$ be a Priestley morphism between $L$-spaces. 
If $\min Y$ is dense in $Y$ and $\up f[U]$ is clopen for each $U \in \ClopUp(X)$,
then $f$ is a p-morphism.
\end{lemma}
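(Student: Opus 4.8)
The plan is to invoke the standard criterion that a Priestley morphism $f$ is a p-morphism iff $f[\up x]$ is an upset for every $x \in X$ (see \cite[Prop.~1.4.12]{Esa19}), exactly as in the proof of \cref{thm: JT in Priestley terms}. Arguing by contradiction, I would assume there are $x \in X$ and $y \in Y$ with $f(x) \le y$ but $y \notin f[\up x]$, and aim for a contradiction with the two hypotheses ($\min Y$ dense, $\up f[U]$ clopen).

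The first step is to approximate $\up x$ by clopen upsets. By \cref{lem: facts Priestley 2} we have $\up x = \bigcap\{U \in \ClopUp(X) : x \in U\}$, and then \cref{lem: facts Priestley 4} gives $f[\up x] = \bigcap\{f[U] : x \in U \in \ClopUp(X)\}$. Since $y$ lies outside this intersection, I can fix a clopen upset $U$ with $x \in U$ and $y \notin f[U]$.

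The second step is to produce a minimal point that contradicts the hypotheses. Because $x \in U$, we have $f(x) \in f[U]$, and $f(x) \le y$ forces $y \in \up f[U]$. By assumption $\up f[U]$ is clopen, and $f[U]$ is closed as the continuous image of the compact set $U$; hence $\up f[U] \setminus f[U]$ is open, and it is nonempty since it contains $y$. Density of $\min Y$ then yields a minimal point $m \in \up f[U] \setminus f[U]$. From $m \in \up f[U]$ I obtain some $w \in f[U]$ with $w \le m$, and minimality of $m$ forces $w = m$, so $m \in f[U]$ --- contradicting $m \notin f[U]$. Hence $f[\up x]$ is an upset for every $x \in X$, i.e., $f$ is a p-morphism.

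The argument is short and I do not anticipate a genuine obstacle; the only points requiring a little care are the passage from $\up x$ to clopen upsets via \cref{lem: facts Priestley 2,lem: facts Priestley 4} and the observation that $\up f[U] \setminus f[U]$ is open, which is precisely what makes density of $\min Y$ applicable. Once a minimal element is trapped in that set, minimality closes the argument immediately.
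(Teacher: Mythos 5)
Your proof is correct and follows essentially the same route as the paper: both arguments reduce the p-morphism condition to showing images of clopen upsets are upsets (via \cref{lem: facts Priestley 2,lem: facts Priestley 4}), and both derive the contradiction by observing that $\up f[U] \setminus f[U]$ is a nonempty open set which, by density, must contain a minimal point that is then forced into $f[U]$. The only cosmetic difference is that you unpack the reduction to clopen upsets explicitly, whereas the paper cites the corresponding step of its earlier theorem.
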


\begin{proof}
By the same argument as in the proof of (1)$\Rightarrow$(2) of \cref{thm: JT in Priestley terms}, it suffices to show that $f[U]$ is an upset for each $U \in \ClopUp(X)$. If not,
then $\up f[U]\setminus f[U]\ne\varnothing$ for some $U\in\ClopUp(X)$. Let $V = \up f[U]\setminus f[U]$. Since $f[U]$ is closed, $\up f[U]$ is closed by \cref{lem: facts Priestley 3}. Therefore, $V$ is a nonempty open subset of $Y$. Thus, $V \cap \min Y \ne \varnothing$. On the other hand,
\[
V \cap \min Y \subseteq \up f[U] \cap \min Y \subseteq f[U] \cap \min Y.
\]
This is a contradiction since $V \cap f[U] = \varnothing$. Consequently, $f[U]$ is an upset.
\end{proof}

As an immediate consequence of \cref{lem: subfit implies p-morphism}, we obtain:

\begin{theorem} \label{thm: PP version of JT}
Let $f : X \to Y$ be a Priestley morphism between $L$-spaces. 
If $\min Y$ is dense in $Y$, then Condition $(2)$ in Theorem~\emph{\ref{thm: JT in Priestley terms}} is equivalent to
\begin{enumerate}
\item[$(2^\prime)$] $\up f[U]$ is clopen for each $U\in\ClopUp(X)$.
\end{enumerate}
\end{theorem}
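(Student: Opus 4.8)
The plan is to prove the two implications $(2)\Rightarrow(2')$ and $(2')\Rightarrow(2)$ separately; both are short, since the substantive work has already been carried out in \cref{lem: subfit implies p-morphism}. The implication $(2)\Rightarrow(2')$ requires nothing: condition $(2')$ is obtained from $(2)$ merely by discarding the requirement that $f$ be a p-morphism, so it holds a fortiori, and the density hypothesis on $\min Y$ is not even needed here.

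For $(2')\Rightarrow(2)$, I would invoke \cref{lem: subfit implies p-morphism} directly. Assuming $(2')$, the sets $\up f[U]$ are clopen for every $U\in\ClopUp(X)$; since we are moreover assuming that $\min Y$ is dense in $Y$, the hypotheses of \cref{lem: subfit implies p-morphism} are exactly satisfied, and that lemma delivers that $f$ is a p-morphism. Combining this conclusion with $(2')$ itself gives precisely condition $(2)$ of \cref{thm: JT in Priestley terms}, completing the equivalence.

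I do not expect any genuine obstacle: the one delicate point — extracting the p-morphism property from the clopenness of the sets $\up f[U]$ together with the density of $\min Y$ — has been isolated in \cref{lem: subfit implies p-morphism}, where the crux is that $\up f[U]\setminus f[U]$, being open, would meet $\min Y$ if nonempty, yet $\up f[U]\cap\min Y\subseteq f[U]\cap\min Y$ forbids this. Hence the proof of \cref{thm: PP version of JT} is simply the assembly of the trivial direction with a citation of that lemma.
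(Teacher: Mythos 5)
Your proof is correct and is exactly the paper's argument: the paper derives the theorem as an immediate consequence of \cref{lem: subfit implies p-morphism}, with the direction $(2)\Rightarrow(2')$ being trivial and $(2')\Rightarrow(2)$ following from that lemma's conclusion that $f$ is a p-morphism.
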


\cref{thm: JT in Priestley terms,thm: PP version of JT} together with \cref{lem: translation of r open,lem: translation of h complete}
 yield the following version of the Joyal-Tierney Theorem:

\begin{corollary} \cite[Prop.~V.1.8]{PP12}
Let $r : M \to L$ be a localic map with left adjoint $h$. If $L$ is subfit, then $r$ is open iff $h$ is a complete lattice homomorphism.
\end{corollary}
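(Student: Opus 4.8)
The plan is to read the statement off the dual reformulations already established. Let $f : X_M \to X_L$ be the Priestley dual of $h$. Since $M$ and $L$ are frames, $X_M$ and $X_L$ are $L$-spaces, so both \cref{thm: JT in Priestley terms} and \cref{thm: PP version of JT} apply to $f$, taking $X = X_M$ and $Y = X_L$.

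First I would unpack the two endpoints of the desired equivalence. On the one side, \cref{lem: translation of r open} says that $r$ is open iff $f[U] \in \ClopUp(X_L)$ for every $U \in \ClopUp(X_M)$, which is exactly Condition $(1)$ of \cref{thm: JT in Priestley terms}. On the other side, since $h$ is already a frame homomorphism it preserves arbitrary joins and finite meets, so $h$ is a complete lattice homomorphism precisely when it preserves all meets; by \cref{lem: translation of h complete} this is equivalent to $\up f[U]$ being clopen for every $U \in \ClopUp(X_M)$, that is, to Condition $(2')$ of \cref{thm: PP version of JT}.

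It remains to bridge these two conditions, and this is where the hypothesis enters: by the lemma characterizing subfitness in terms of Priestley spaces, $L$ being subfit means exactly that $\min X_L$ is dense in $X_L$, which is precisely the hypothesis of \cref{thm: PP version of JT}. That theorem then yields the equivalence of Condition $(2')$ with Condition $(2)$ of \cref{thm: JT in Priestley terms}, while \cref{thm: JT in Priestley terms} itself gives the equivalence of Condition $(2)$ with Condition $(1)$. Chaining everything, $r$ open $\iff (1) \iff (2) \iff (2') \iff h$ preserves all meets $\iff h$ is a complete lattice homomorphism. There is essentially no obstacle left here, since all the substance has been packed into the translation lemmas and the two Priestley-space theorems; the only points requiring a moment of care are the bookkeeping remark that for a frame homomorphism ``complete lattice homomorphism'' coincides with ``preserves arbitrary meets,'' and the observation that ``$L$ subfit'' is exactly the density hypothesis needed to invoke \cref{thm: PP version of JT}.
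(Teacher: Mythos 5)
Your proposal is correct and follows exactly the route the paper intends: the corollary is obtained by chaining \cref{lem: translation of r open}, \cref{lem: translation of h complete}, the dual characterization of subfitness, \cref{thm: PP version of JT}, and \cref{thm: JT in Priestley terms}. The bookkeeping points you flag (that for a frame homomorphism ``complete lattice homomorphism'' reduces to preservation of all meets, and that subfitness of $L$ is precisely the density hypothesis on $\min X_L$) are exactly the right ones.
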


\end{document}